\author[M.~Balcerzak]{Marek Balcerzak}
\address{Institute of Mathematics, \L{}\'{o}d\'{z} University of Technology, ul. W\'{o}lcza\'{n}ska 215, 93-005 \L{}\'{o}d\'{z}, Poland}
\email{marek.balcerzak@p.lodz.pl}
\author[Sz.~G{\l}\k{a}b]{Szymon G{\l}\k{a}b}
\address{Institute of Mathematics, \L{}\'{o}d\'{z} University of Technology, ul. W\'{o}lcza\'{n}ska 215, 93-005 \L{}\'{o}d\'{z}, Poland}
\email{szymon.glab@p.lodz.pl}
\author[P.~Leonetti]{Paolo Leonetti}
\address{Department of Decision Sciences, Universit\`a Luigi Bocconi, via Roentgen 1, 20136 Milan, Italy}
\email{leonetti.paolo@gmail.com}
\keywords{Ideal limit point, ideal cluster point, meager ideal, subsequences, permutations.}
\subjclass[2020]{Primary: 40A35. Secondary: 11B05, 54A20.}
\title{Another characterization of meager ideals}
   \def\MR#1{}
\newtheorem{thm}{Theorem}[section]
\newtheorem{cor}[thm]{Corollary}
\newtheorem{lem}[thm]{Lemma}
\theoremstyle{definition} 
\let\olddefi\defi
\renewcommand{\defi}{\olddefi\normalfont}
\let\oldexample\example
\renewcommand{\example}{\oldexample\normalfont}
\newtheorem{rmk}[thm]{Remark}
\let\oldrmk\rmk
\renewcommand{\rmk}{\oldrmk\normalfont}
\newcommand{\clusterfin}{\mathrm{L}_x}
\providecommand{\MR}[1]{}
\providecommand{\MR}{\relax\ifhmode\unskip\space\fi MR }
\providecommand{\href}[2]{#2}
\thanks{P.L. is grateful to PRIN 2017 (grant 2017CY2NCA)
for financial support.}
\begin{document}

\maketitle
\thispagestyle{empty}

\begin{abstract}
\noindent We show that an ideal $\mathcal{I}$ on the positive integers is meager if and only if there exists a bounded nonconvergent real sequence $x$ such that the set of subsequences [resp. permutations] of $x$ which preserve the set of $\mathcal{I}$-limit points is comeager and, in addition, every accumulation point of $x$ is also an $\mathcal{I}$-limit point (that is, a limit of a subsequence $(x_{n_k})$ such that $\{n_1,n_2,\ldots,\} \notin \mathcal{I}$). The analogous characterization holds also for $\mathcal{I}$-cluster points.  
\end{abstract}

\section{Introduction}\label{sec:intro}

By a known result due to Buck \cite{MR9997}, almost every subsequence, in the sense of measure, of a given real sequence $x$ has the same set of ordinary limit points of the original sequence $x$. 
Extensions and other measure-related results may be found in \cite{MR3568092, 
MR3879311, 
Leo17b, 
MR1260176, 
MR4089884, 
MR1924673}. 
The aim of this note is to prove its topological [non]analogue in the context of ideal convergence, following the line of research in \cite{MR4165727, MR3968131, MR3950736, MR4183385}. This will allow us to obtain a characterization of meager ideals in Theorem \ref{cor:characterizationmeager}. 

We recall briefly the main definitions given in \cite{MR4165727}. 
Let $\mathcal{I}$ be an ideal on the positive integers $\mathbf{N}$, that is, a proper subset of $\mathcal{P}(\mathbf{N})$ closed under taking finite unions and subsets and containing the family $\mathrm{Fin}$ of finite sets. Ideals will be regarded as subsets of the Cantor space $\{0,1\}^{\mathbf{N}}$ endowed with the product topology. 
Let also $x=(x_n)$ be a sequence taking values in a topological space $X$, which will be always assumed Hausdorff. 
Then, denote by $\Gamma_x(\mathcal{I})$ the set of its $\mathcal{I}$-cluster points, that is, the set of all $\eta \in X$ such that $\{n \in \mathbf{N}: x_n \in U\} \notin \mathcal{I}$ for all neighborhoods $U$ of $\eta$. Lastly, let $\mathrm{L}_x:=\Gamma_x(\mathrm{Fin})$ be the set of ordinary accumulation points of $x$. 

Following the same notations in \cite{MR4165727}, define
$$
\Sigma:=\{\sigma \in \mathbf{N}^{\mathbf{N}}: \sigma \text{ is strictly increasing\hspace{.5mm}}\}.
$$ 
For each $\sigma \in \Sigma$, we denote by $\sigma(x)$ the subsequence $(x_{\sigma(n)})$. We identify each subsequence of $(x_{k_n})$ of $x$ with the function $\sigma \in \Sigma$ defined by $\sigma(n)=k_n$ for all $n \in \mathbf{N}$. 
Similarly, define 
$$
\Pi:=\{\pi \in  \mathbf{N}^{\mathbf{N}}: \pi \text{ is a bijection\hspace{.5mm}}\}
$$  
and write $\pi(x)$ for the rearranged sequence $(x_{\pi(n)})$. Endow both $\Sigma$ and $\Pi$ with their relative topology and note, since they are $G_\delta$-subsets of $\mathbf{N}^{\mathbf{N}}$, they are Polish spaces by Alexandrov's theorem. 
In particular, they are not meager in themselves. 
Finally, denote by
$$
\Sigma_x(\mathcal{I}):=\left\{\sigma \in \Sigma: \Gamma_{\sigma(x)}(\mathcal{I})=\Gamma_x(\mathcal{I})\right\}
$$ 
the set of subsequences of $x$ which preserve the $\mathcal{I}$-cluster points of $x$, and by
$$
\Pi_x(\mathcal{I}):=\left\{\pi \in \Pi: \Gamma_{\pi(x)}(\mathcal{I})=\Gamma_x(\mathcal{I})\right\}
$$ 
its permutation analogue. 

The following result has been shown in \cite[Theorem 2.2]{MR4165727}:
\begin{thm}\label{thm:theoremcluster}
Let $x$ be a sequence taking values in a first countable space $X$ such that all closed sets are separable and let $\mathcal{I}$ be a meager ideal on $\mathbf{N}$. 

Then the following are equivalent\textup{:} 
\begin{enumerate}[label={\rm (\textsc{c}\arabic{*})}]
\item \label{item:c1} $\Sigma_x(\mathcal{I})$ is comeager\textup{;}
\item \label{item:c2} $\Sigma_x(\mathcal{I})$ is not meager\textup{;}
\item \label{item:c3} $\Pi_x(\mathcal{I})$ is comeager\textup{;}
\item \label{item:c4} $\Pi_x(\mathcal{I})$ is not meager\textup{;}
\item \label{item:c5} $\Gamma_x(\mathcal{I})=\clusterfin$\textup{.}
\end{enumerate}
\end{thm}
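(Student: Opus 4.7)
The plan is to establish the equivalences via the chains $(\textsc{c}5) \Rightarrow (\textsc{c}1) \Rightarrow (\textsc{c}2) \Rightarrow (\textsc{c}5)$ and $(\textsc{c}5) \Rightarrow (\textsc{c}3) \Rightarrow (\textsc{c}4) \Rightarrow (\textsc{c}5)$, using $(\textsc{c}5)$ as the pivot. The implications $(\textsc{c}1) \Rightarrow (\textsc{c}2)$ and $(\textsc{c}3) \Rightarrow (\textsc{c}4)$ are immediate since $\Sigma$ and $\Pi$ are Polish (hence Baire) and a comeager set in a Baire space cannot be meager. Moreover, the inclusion $\Gamma_{\sigma(x)}(\mathcal{I}) \subseteq \mathrm{L}_{\sigma(x)} \subseteq \mathrm{L}_x$ holds for every $\sigma \in \Sigma$ (and analogously for $\pi \in \Pi$), so under $(\textsc{c}5)$ only the reverse inclusion $\Gamma_x(\mathcal{I}) \subseteq \Gamma_{\sigma(x)}(\mathcal{I})$ needs to be produced for comeagerly many $\sigma$.

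The engine of the argument is the following auxiliary claim, where the meagerness of $\mathcal{I}$ enters through Talagrand's characterization: fix an interval partition $(I_k)_{k \in \mathbf{N}}$ of $\mathbf{N}$ such that every $B \in \mathcal{I}$ satisfies $I_k \not\subseteq B$ for all but finitely many $k$. Then for every infinite $A \subseteq \mathbf{N}$, the set
\[
\{\sigma \in \Sigma : \sigma^{-1}(A) \notin \mathcal{I}\}
\]
is comeager, and the analogous statement holds in $\Pi$. I would prove this by showing that each $\{\sigma : \sigma(I_k) \subseteq A\}$ is open (it depends only on the finite prefix $\sigma\!\!\restriction_{I_k}$) and that $\bigcup_{k \geq N} \{\sigma : \sigma(I_k) \subseteq A\}$ is dense for every $N$: given any finite prefix, choose $k \geq N$ with $\min I_k$ larger than the prefix length, then extend using values of $A$ lying above the prefix. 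Talagrand then forces $\sigma^{-1}(A) \notin \mathcal{I}$ on the comeager intersection over $N$. For $\Pi$ the extension must be a bijection, but this is harmless: at each stage only finitely many values are used, and the remaining positions and values can be matched by a back-and-forth construction.

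With this auxiliary claim, $(\textsc{c}5) \Rightarrow (\textsc{c}1)$ goes as follows. Since $\mathrm{L}_x$ is closed in $X$, it is separable by hypothesis, so fix a countable dense $D \subseteq \mathrm{L}_x$; first countability yields a countable neighborhood basis $(U_{\eta,n})_n$ at each $\eta \in D$. For every such pair the set $A_{\eta,n} := \{k : x_k \in U_{\eta,n}\}$ is infinite because $\eta$ is an accumulation point, so applying the claim to this countable family and intersecting gives a comeager set of $\sigma$ with $\eta \in \Gamma_{\sigma(x)}(\mathcal{I})$ for every $\eta \in D$. A short density step upgrades this to $\mathrm{L}_x \subseteq \Gamma_{\sigma(x)}(\mathcal{I})$: given $\eta' \in \mathrm{L}_x$ and a neighborhood $V$ of $\eta'$, pick $\eta \in D \cap V$ (possible by density) and a basic $U_{\eta,n} \subseteq V$; then $\sigma^{-1}(A_{\eta,n}) \subseteq \sigma^{-1}(\{k : x_k \in V\})$ and the smaller set is not in $\mathcal{I}$. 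The implication $(\textsc{c}5) \Rightarrow (\textsc{c}3)$ is identical with $\Pi$ replacing $\Sigma$.

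Finally, for $(\textsc{c}2) \Rightarrow (\textsc{c}5)$ (and identically $(\textsc{c}4) \Rightarrow (\textsc{c}5)$), I would argue the contrapositive: if there is $\eta \in \mathrm{L}_x \setminus \Gamma_x(\mathcal{I})$, first countability at $\eta$ supplies a countable neighborhood basis $(V_n)_n$ with each $\{k : x_k \in V_n\}$ infinite. Applying the auxiliary claim to this countable family produces a comeager set of $\sigma$ with $\sigma^{-1}(\{k : x_k \in V_n\}) \notin \mathcal{I}$ for every $n$, which places $\eta \in \Gamma_{\sigma(x)}(\mathcal{I}) \setminus \Gamma_x(\mathcal{I})$ and hence $\sigma \notin \Sigma_x(\mathcal{I})$; thus $\Sigma_x(\mathcal{I})$ is meager, contradicting $(\textsc{c}2)$. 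The principal technical hurdle is the auxiliary claim itself: it is precisely where Talagrand's characterization translates the combinatorial meagerness of $\mathcal{I}$ inside $\{0,1\}^{\mathbf{N}}$ into topological meagerness inside $\Sigma$ and $\Pi$. Once this is in place, the remaining arguments are routine exercises in first countability and separability.
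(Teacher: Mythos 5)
Your proposal is correct and follows essentially the same route as the paper: your auxiliary claim is exactly the content of Lemma~\ref{lem:keylemma} (and of \cite[Lemma 3.3]{MR4165727}, on which the cited proof of Theorem~\ref{thm:theoremcluster} rests), namely translating Talagrand's interval characterization of meager ideals into comeagerness of $\{\sigma:\sigma^{-1}(A)\notin\mathcal{I}\}$, and your cycle through \ref{item:c5} with the separability/density upgrade and the contrapositive for \ref{item:c2}~$\Rightarrow$~\ref{item:c5} mirrors the paper's proof of the analogous Theorem~\ref{thm:theoremlimit}. No gaps worth flagging.
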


Given a sequence $x=(x_n)$ taking values in a topological space $X$, we denote by $\Lambda_x(\mathcal{I})$ the set of $\mathcal{I}$-limit points of $x$, that is, the set of all $\eta \in X$ such that $\lim \sigma(x)=\eta$ for some $\sigma \in \Sigma$ such that $\sigma[\mathbf{N}] \notin \mathcal{I}$. 
It is well known that $\Lambda_x(\mathcal{I}) \subseteq \Gamma_x(\mathcal{I}) \subseteq \mathrm{L}_x$. 
Further relationships between $\mathcal{I}$-cluster points and $\mathcal{I}$-limit points have been studied in \cite{MR3883171}. 
Similarly, we denote by 
$$
\tilde{\Sigma}_x(\mathcal{I}):=\left\{\sigma \in \Sigma: \Lambda_{\sigma(x)}(\mathcal{I})=\Lambda_x(\mathcal{I})\right\}
$$
and
$$
\tilde{\Pi}_x(\mathcal{I}):=\left\{\pi \in \Pi: \Lambda_{\pi(x)}(\mathcal{I})=\Lambda_x(\mathcal{I})\right\}
$$
the analogues of $\Sigma_x(\mathcal{I})$ and $\Pi_x(\mathcal{I})$, respectively, for $\mathcal{I}$-limit points. 

Our first main result is the exact analogue of Theorem \ref{thm:theoremcluster} for $\mathcal{I}$-limit points.
\begin{thm}\label{thm:theoremlimit}
Let $x$ be a sequence taking values in a first countable space $X$ such that all closed sets are separable and let $\mathcal{I}$ be a meager ideal on $\mathbf{N}$. 
 
Then the following are equivalent\textup{:} 
\begin{enumerate}[label={\rm (\textsc{l}\arabic{*})}]
\item \label{item:l1} $\tilde{\Sigma}_x(\mathcal{I})$ is comeager\textup{;}
\item \label{item:l2} $\tilde{\Sigma}_x(\mathcal{I})$ is not meager\textup{;}
\item \label{item:l3} $\tilde{\Pi}_x(\mathcal{I})$ is comeager\textup{;}
\item \label{item:l4} $\tilde{\Pi}_x(\mathcal{I})$ is not meager\textup{;}
\item \label{item:l5} $\Lambda_x(\mathcal{I})=\clusterfin$\textup{.}
\end{enumerate}
\end{thm}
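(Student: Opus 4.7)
The plan is to mirror the proof of Theorem~\ref{thm:theoremcluster} and to use that theorem itself as the main reduction tool. The implications \ref{item:l1}~$\Rightarrow$~\ref{item:l2} and \ref{item:l3}~$\Rightarrow$~\ref{item:l4} are immediate, since $\Sigma$ and $\Pi$ are Polish and therefore non-meager in themselves.

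For the forward direction \ref{item:l5}~$\Rightarrow$~\ref{item:l1} (and analogously \ref{item:l5}~$\Rightarrow$~\ref{item:l3}), I would first note that the inclusions $\Lambda_x(\mathcal{I}) \subseteq \Gamma_x(\mathcal{I}) \subseteq \clusterfin$, together with \ref{item:l5}, give $\Gamma_x(\mathcal{I}) = \clusterfin$, i.e., condition \ref{item:c5} of Theorem~\ref{thm:theoremcluster} holds. That theorem then supplies a comeager set of $\sigma \in \Sigma_x(\mathcal{I})$, and for every such $\sigma$ one has $\Lambda_{\sigma(x)}(\mathcal{I}) \subseteq \Gamma_{\sigma(x)}(\mathcal{I}) = \clusterfin = \Lambda_x(\mathcal{I})$ for free. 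To secure the reverse inclusion generically, I would fix a countable dense subset $\{\ell_k : k \in \mathbf{N}\}$ of $\clusterfin$ (closed, hence separable by hypothesis) and, using \ref{item:l5}, select witnesses $B_k \notin \mathcal{I}$ with $(x_n)_{n \in B_k} \to \ell_k$. The key fact, which follows from the meagerness of $\mathcal{I}$ via a Talagrand-style argument, is that for every fixed $B \notin \mathcal{I}$ the collection $\{\sigma \in \Sigma : \sigma^{-1}(B) \notin \mathcal{I}\}$ is comeager in $\Sigma$; intersecting countably many such comeager sets yields a comeager family of $\sigma$ for which each $\ell_k$ belongs to $\Lambda_{\sigma(x)}(\mathcal{I})$. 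The permutation case \ref{item:l5}~$\Rightarrow$~\ref{item:l3} would run identically with $\Pi$ replacing $\Sigma$.

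I would handle \ref{item:l2}~$\Rightarrow$~\ref{item:l5} and \ref{item:l4}~$\Rightarrow$~\ref{item:l5} by contraposition. Assuming $\Lambda_x(\mathcal{I}) \subsetneq \clusterfin$ and fixing $\ell \in \clusterfin \setminus \Lambda_x(\mathcal{I})$, it suffices to show that comeagerly many $\sigma$ satisfy $\ell \in \Lambda_{\sigma(x)}(\mathcal{I})$, for then $\tilde{\Sigma}_x(\mathcal{I}) \subseteq \{\sigma \in \Sigma : \ell \notin \Lambda_{\sigma(x)}(\mathcal{I})\}$ lies inside a meager set. Splitting according to whether $\ell \in \Gamma_x(\mathcal{I})$, and again leaning on the Talagrand decomposition associated with the meager ideal, I would manufacture, for each generic $\sigma$, a set $B \notin \mathcal{I}$ with $(x_{\sigma(m)})_{m \in B} \to \ell$.

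The most delicate step, I expect, is moving from the countable dense subset $\{\ell_k\}$ to an arbitrary $\ell \in \clusterfin$ in the forward direction: unlike $\Gamma_y(\mathcal{I})$, the set $\Lambda_y(\mathcal{I})$ need not be closed, so the density-plus-closure argument that suffices in Theorem~\ref{thm:theoremcluster} is unavailable here. Overcoming this will likely require either a diagonal construction of a pseudo-intersection tailored to the generic $\sigma$, or a Talagrand-partition argument producing, on a comeager set of $\sigma$, a single witness valid simultaneously for every $\ell \in \clusterfin$.
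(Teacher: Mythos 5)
Your skeleton matches the paper's (contraposition for \ref{item:l2}~$\Rightarrow$~\ref{item:l5} via a single exceptional point, and a countable dense subset of $\clusterfin$ for \ref{item:l5}~$\Rightarrow$~\ref{item:l1}), and the "key fact" you state is true. But the proposal has a genuine gap at exactly the two places where the real work lies, and you leave both unresolved. First, in \ref{item:l2}~$\Rightarrow$~\ref{item:l5} you say you would "manufacture, for each generic $\sigma$, a set $B \notin \mathcal{I}$ with $(x_{\sigma(m)})_{m \in B} \to \ell$" without saying how; note that here $\ell \notin \Lambda_x(\mathcal{I})$, so there is no witness set $B_\ell \notin\mathcal{I}$ for the original sequence to pull back, and your comeager sets $\{\sigma : \sigma^{-1}(B)\notin\mathcal{I}\}$ give you nothing. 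Second, and more seriously, the upgrade from the countable dense set $\{\ell_k\}$ to an arbitrary $\ell \in \clusterfin$ --- which you correctly flag as the delicate step, precisely because $\Lambda_y(\mathcal{I})$ need not be closed --- cannot be carried out from the information you have retained. Knowing only that $\sigma^{-1}(B_k)\notin\mathcal{I}$ for every $k$ does not let you diagonalize: you would need a set $B\notin\mathcal{I}$ that is almost contained in a suitable tail of each $\sigma^{-1}(B_{k_t})$, i.e.\ an $\mathcal{I}$-positive pseudo-intersection, and a general meager ideal provides no such thing.

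The paper's resolution is to keep, rather than discard, the combinatorial structure coming from Talagrand's characterization of meager ideals: there are intervals $I_k=[\iota_k,\iota_{k+1})$ such that any set containing infinitely many of them is $\mathcal{I}$-positive. The comeager set is then $S=\bigcap_m\bigcap_{\eta\in\mathscr{L}}S_m(\eta)$ where $S_m(\eta)$ consists of those $\sigma$ mapping \emph{infinitely many whole intervals} $I_k$ into $\{n : x_n\in U_{\eta,m}\}$ (each $S_m(\eta)$ is open dense because that index set is infinite for every $\eta\in\clusterfin$ --- this also handles the first gap, since it needs only $\eta\in\clusterfin$, not $\eta\in\Lambda_x(\mathcal{I})$). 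For an arbitrary $\eta\in\clusterfin$ one picks $\eta_t\to\eta$ in $\mathscr{L}$ with $U_{\eta_t,m_t}\subseteq U_{\eta,t}$, chooses for each $t$ one interval $I_{k_t}$ on which $x_{\sigma(n)}\in U_{\eta_t,m_t}$, and takes the union of these intervals: it contains infinitely many $I_k$, hence is $\mathcal{I}$-positive, and along it $\sigma(x)\to\eta$. Your "key fact" is really the shadow of this construction (a generic $\sigma$ maps infinitely many intervals into any fixed infinite set), so if you unwound its proof and worked with the interval-level statement instead of the bare conclusion $\sigma^{-1}(B_k)\notin\mathcal{I}$, you would recover the paper's argument; as written, the proposal stops short of the theorem.
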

We remark that Theorem \ref{thm:theoremlimit} provides an affirmative answer to the open question stated at the end of Section 2 in \cite{MR4165727}. In addition, several special cases of Theorem \ref{thm:theoremcluster} have been already obtained in the literature:
\begin{enumerate}[label={\rm (\roman{*})}]
\item \cite[Theorem 2.3]{MR3950736} for the case where $X=\mathbf{R}$, $\mathcal{I}=\mathcal{Z}:=\{A\subseteq \mathbf{N}: \lim_n |A\cap [1,n]|/n=0\}$, and the equivalences \ref{item:l1} $\Longleftrightarrow$ \ref{item:l2} $\Longleftrightarrow$ \ref{item:l5};
\item \cite[Theorem 2.3]{MR3968131} for the case where $\mathcal{I}$ is a generalized density ideal and the equivalences \ref{item:l1} $\Longleftrightarrow$ \ref{item:l2} $\Longleftrightarrow$ \ref{item:l5};
\item \cite[Theorem 2.9]{MR4165727} for the case where $\mathcal{I}$ is an analytic P-ideal;
\item \cite[Theorem 1]{MR4183385} for the case where $X=\mathbf{R}$ with the equivalence \ref{item:l1} $\Longleftrightarrow$ \ref{item:l5}.
\end{enumerate} 

At this point, as it has been shown in \cite[Example 2.6]{MR4165727}, it is worth noting that, if $\mathcal{I}$ is maximal (that is, the complement of a free ultrafilter), 
then there exists a bounded real sequence $x$ which satisfies \ref{item:c2} 
but not \ref{item:c5}, cf. Remark \ref{rmk:nostrongercorollary} below. Hence the statement of Theorem \ref{thm:theoremcluster} (and similarly for Theorem \ref{thm:theoremlimit}) certainly does not apply to all ideals. 

We are going to show below that this is not a coincidence, roughly meaning that the hypothesis of meagerness of the ideal $\mathcal{I}$ is essential for all the above equivalences. In other words, this provides a characterization of meager ideals, which is our second main result. 

Before we present this characterization, we recall a similar result on series of real numbers. To this aim, we need some additional notations: for each real sequence $x$, let $Sx$ be the sequence of its partial sums, that is, $Sx=(S_nx)$ with $S_nx:=\sum_{i\le n}x_i$ for all $n \in \mathbf{N}$. The sequence $x$ is said to be $\mathcal{I}$-bounded if $\{n \in \mathbf{N}: |x_n|>k\} \in \mathcal{I}$ for some $k \in \mathbf{N}$. 
The vector space of $\mathcal{I}$-bounded sequences, denoted by $\ell_\infty(\mathcal{I})$, has been studied, e.g., in \cite{MR2735533, MR2135846}. Then, define 
$$
\Sigma_{S,x}(\mathcal{I}):=\{\sigma \in \Sigma: S\sigma(x) \in \ell_\infty(\mathcal{I})\}.
$$
Accordingly, the following result has been shown in \cite[Theorem 4.1]{MR3712964}.
\begin{thm}\label{thm:poplawski}
Let $x$ be a real sequence such that the series $\sum_nx_n$ is not unconditionally convergent and let $\mathcal{I}$ be a meager ideal on $\mathbf{N}$. Then\textup{:}
\begin{enumerate}[label={\rm (\textsc{s}\arabic{*})}]
\item \label{item:s1} $\Sigma_{S,x}(\mathcal{I})$ is meager\textup{.}
\end{enumerate}
\end{thm}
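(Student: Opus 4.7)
Since $\sum_n x_n$ is not unconditionally convergent and $x$ is real, $\sum_n |x_n|=\infty$, so at least one of $\sum_n x_n^+$ and $\sum_n x_n^-$ diverges; by symmetry I assume $\sum_n x_n^+=\infty$. The plan is to invoke Talagrand's characterization of meager ideals: since $\mathcal{I}$ is meager there is a partition of $\mathbf{N}$ into consecutive finite intervals $(J_k)_{k\ge 1}$ such that, for every $A \in \mathcal{I}$, the inclusion $J_k \subseteq A$ holds for at most finitely many $k$.

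For each $M \in \mathbf{N}$, set
$$
A_M := \{\sigma \in \Sigma : \{n \in \mathbf{N}: |S_n\sigma(x)|>M\} \in \mathcal{I}\},
$$
so that $\Sigma_{S,x}(\mathcal{I}) = \bigcup_{M\in\mathbf{N}} A_M$ and it is enough to show that every $A_M$ is meager. I would then introduce the sets
$$
C_j := \{\sigma \in \Sigma : |S_n\sigma(x)|>M \text{ for every }n \in J_j\},
$$
which are open in $\Sigma$ because the defining condition depends only on $\sigma(1),\ldots,\sigma(\max J_j)$. By the Talagrand property, any $\sigma$ lying in $C_j$ for infinitely many $j$ satisfies $\{n:|S_n\sigma(x)|>M\}\notin\mathcal{I}$, hence $\Sigma\setminus A_M \supseteq \bigcap_k \bigcup_{j\ge k} C_j$; since $\Sigma$ is Polish, it suffices to prove that each open set $U_k:=\bigcup_{j\ge k}C_j$ is dense.

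For density, fix a basic open neighborhood $U\subseteq\Sigma$ determined by an initial segment $\sigma(1)=a_1<\cdots<a_N=\sigma(N)$, and write $s:=\sum_{i\le N}x_{a_i}$. Since $\sum_{n>a_N}x_n^+=\infty$, I can choose $b_1<b_2<\cdots$ with $b_1>a_N$ and $x_{b_i}>0$ for every $i$, and extend the initial segment to a strictly increasing $\sigma\in U$ by setting $\sigma(N+i):=b_i$. Then $S_{N+r}\sigma(x)=s+\sum_{i=1}^r x_{b_i}$ is non-decreasing in $r$ and tends to $+\infty$, so there is $r_0$ with $S_n\sigma(x)>M$ for all $n\ge N+r_0$; taking any $j\ge k$ with $\min J_j\ge N+r_0$ gives $\sigma\in U\cap C_j\subseteq U\cap U_k$, as required.

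The main technical input is Talagrand's characterization of meager ideals via interval partitions; once this is in hand, the construction is soft and uses $\sum_n|x_n|=\infty$ only through the freedom to force the partial sums of a tail subsequence to diverge monotonically in one direction. The symmetric case $\sum_n x_n^-=\infty$ is handled identically by picking $b_i$ with $x_{b_i}<0$, and I do not foresee any further obstacle.
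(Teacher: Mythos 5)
Your proof is correct. One point to make explicit: after fixing the stem $a_1<\cdots<a_N$ you must take the $b_i$ to enumerate \emph{all} indices $n>a_N$ with $x_n>0$ (or at least a subset whose terms still sum to $+\infty$); merely picking some infinite increasing sequence of positive-term indices does not by itself guarantee that $s+\sum_{i\le r}x_{b_i}\to+\infty$, which is what your choice of $r_0$ relies on. With that reading, every step checks out: the decomposition $\Sigma_{S,x}(\mathcal{I})=\bigcup_M A_M$, the openness of the sets $C_j$, the use of the Talagrand interval partition to conclude $\{n:|S_n\sigma(x)|>M\}\notin\mathcal{I}$ whenever $\sigma$ lies in infinitely many $C_j$, and the density argument are all sound. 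As for comparison: the paper does not prove this statement at all --- it imports it verbatim from \cite[Theorem 4.1]{MR3712964} --- so you have supplied a self-contained proof where the paper offers only a citation. Your method is nonetheless very much in the spirit of the paper's own arguments: the Talagrand characterization of meager ideals via interval partitions \cite[Theorem 2.1]{MR579439} and the ``comeager intersection of dense open sets $\bigcap_k\bigcup_{j\ge k}C_j$'' device are exactly the tools used in Lemma \ref{lem:keylemma}, while the paper proves the \emph{converse} direction (Theorem \ref{thm:converseseries}) by the dual, game-theoretic route through Laflamme's game. Your argument has the advantage of making the whole series-theoretic equivalence \ref{item:mSeries} $\Longleftrightarrow$ \ref{item:m5} self-contained within the techniques already present in the paper, at the cost of no extra generality (the cited source treats Banach-space-valued series, where ``not unconditionally convergent'' no longer reduces to $\sum_n|x_n|=\infty$ and your reduction to a divergent positive part would need replacing).
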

A related result on series taking values in Banach spaces has been shown in \cite[Corollary 3.3]{MR3989357}, cf. also \cite[Theorem 3.4 and Example 2]{MR3712964}. It is worth to take in mind that, if $\liminf_n x_n>0$, then the series $\sum_nx_n$ is not unconditionally convergent and, for each $\sigma \in \Sigma$ and \emph{all} ideals $\mathcal{I}$, the sequence of partial sums $S\sigma(x)$ has limit infinity, hence $\Sigma_{S,x}(\mathcal{I})=\emptyset$; this example proves that the condition \ref{item:s1} alone cannot provide a characterization of meager ideals.

For the sake of exposition, let $\mathscr{A}$ be the set of all sequences taking values in some first countable space $X$ such that all closed sets are separable, and $\mathscr{B}$ be its subset of nonconvergent sequences with at least one (ordinary) accumulation point. Lastly, 
let 
$\mathscr{C}$ be the set of real sequences with dense image.

\begin{thm}\label{cor:characterizationmeager}
Let $\mathcal{I}$ be an ideal on $\mathbf{N}$. Then the following are equivalent\textup{:} 
\begin{enumerate}[label={\rm (\textsc{m}\arabic{*})}]
\item \label{item:m1} For all sequences $x \in \mathscr{A}$ we have  \ref{item:c1} $\Longleftrightarrow$ \ref{item:c5}\textup{;}
\item \label{item:m3} For all sequences $x \in \mathscr{A}$ we have  \ref{item:c3} $\Longleftrightarrow$ \ref{item:c5}\textup{;}
\item \label{item:m1L} For all sequences $x \in \mathscr{A}$ we have  \ref{item:l1} $\Longleftrightarrow$ \ref{item:l5}\textup{;}
\item \label{item:m3L} For all sequences $x \in \mathscr{A}$ we have  \ref{item:l3} $\Longleftrightarrow$ \ref{item:l5}\textup{;}
\item \label{item:m2} There exists a sequence $x \in \mathscr{B}$ such that both \ref{item:c1} and \ref{item:c5} hold\textup{;}
\item \label{item:m4} There exists a sequence $x \in \mathscr{B}$ such that both \ref{item:c3} and \ref{item:c5} hold\textup{;}
\item \label{item:m2L} There exists a sequence $x \in \mathscr{B}$ such that both \ref{item:l1} and \ref{item:l5} hold\textup{;}
\item \label{item:m4L} There exists a sequence $x \in \mathscr{B}$ such that both \ref{item:l3} and \ref{item:l5} hold\textup{;}
\item \label{item:mSeries} There exists a sequence $x \in \mathscr{C}$ such that \ref{item:s1} holds\textup{;}
\item \label{item:m5} $\mathcal{I}$ is meager\textup{.}
\end{enumerate}
\end{thm}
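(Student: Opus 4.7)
\emph{Proof plan.} The ten conditions will be shown equivalent by closing a cycle at \ref{item:m5}. Meagerness of $\mathcal{I}$ gives \ref{item:m1}, \ref{item:m3}, \ref{item:m1L}, \ref{item:m3L} directly from Theorems \ref{thm:theoremcluster} and \ref{thm:theoremlimit}; it gives \ref{item:mSeries} from Theorem \ref{thm:poplawski} once we exhibit a real sequence with dense image whose series is not unconditionally convergent (for example, $x_n = (-1)^n/n$ perturbed by a small absolutely summable sequence chosen to densify the range). To derive the four existential statements \ref{item:m2}, \ref{item:m4}, \ref{item:m2L}, \ref{item:m4L} from the universal ones, I would exhibit a single $x \in \mathscr{B}$ realizing \ref{item:c5} and \ref{item:l5}; such an $x$ exists for every proper ideal by splitting an infinite set not in $\mathcal{I}$ into two infinite parts, at least one of which (say $A$) remains outside $\mathcal{I}$ and has infinite complement, and then putting $x_n := 0$ for $n \in A$ and $x_n := n$ otherwise. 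One checks $\clusterfin = \{0\} = \Gamma_x(\mathcal{I}) = \Lambda_x(\mathcal{I})$, and $x$ is nonconvergent because of the divergent tail on $A^c$, hence $x \in \mathscr{B}$; applying the corresponding universal equivalence then yields the existential one.

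The substantive content of the theorem is the reverse direction: each of \ref{item:m2}, \ref{item:m4}, \ref{item:m2L}, \ref{item:m4L}, \ref{item:mSeries} implies meagerness of $\mathcal{I}$. I would proceed by contrapositive. Assume $\mathcal{I}$ is non-meager; by Talagrand's classical characterization, for every partition of $\mathbf{N}$ into finite intervals $(I_k)$ there exists $B \in \mathcal{I}$ with $I_k \subseteq B$ for infinitely many $k$. Given any candidate $x \in \mathscr{B}$ satisfying \ref{item:c5} (respectively \ref{item:l5}), fix $\eta \in \clusterfin$ and use first-countability to choose a decreasing basis $(U_m)$ of neighborhoods of $\eta$; let $A_m := \{n \in \mathbf{N}: x_n \in U_m\}$ and construct an interval partition of $\mathbf{N}$ aligned with the nested inclusions $A_1 \supseteq A_2 \supseteq \cdots$. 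Feeding this partition into Talagrand produces $B \in \mathcal{I}$ which can be exploited to build a comeager family of $\sigma \in \Sigma$ (respectively $\pi \in \Pi$) such that $\{n: x_{\sigma(n)} \in U_m\} \in \mathcal{I}$ for some $m$, so $\eta$ is not a cluster (respectively limit) point of $\sigma(x)$; this contradicts comeagerness of $\Sigma_x(\mathcal{I})$ (respectively $\Pi_x(\mathcal{I})$, $\tilde{\Sigma}_x(\mathcal{I})$, $\tilde{\Pi}_x(\mathcal{I})$). For \ref{item:mSeries}, I would adapt the partial-sum category argument from \cite{MR3712964, MR3989357}: non-meagerness of $\mathcal{I}$ should force, for any $x$ with dense image, a non-meager family of $\sigma \in \Sigma$ with $S\sigma(x) \in \ell_\infty(\mathcal{I})$, contradicting \ref{item:s1}.

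The main obstacle is the category argument in the previous paragraph. While \cite[Example 2.6]{MR4165727} handles maximal ideals (the most extreme non-meager case), the general non-meager setting requires a construction that leverages the first-countability and hereditary separability of the target space $X$ in order to kill the cluster (or limit) point structure at \emph{every} $\eta \in \clusterfin$ simultaneously, not merely at one point. The subcase $|\clusterfin|=1$, where $x$ has a unique accumulation point together with a divergent subtail, is likely the trickiest, since the obstruction to comeagerness must be located inside the divergent tail rather than extracted from oscillation between several accumulation points; here the freedom to choose $\sigma \in \Sigma$ supported mostly on $A^c$ needs to be reconciled with the requirement that $\sigma[\mathbf{N}] \notin \mathcal{I}$ in the $\tilde{\Sigma}_x(\mathcal{I})$ variant.
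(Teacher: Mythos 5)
Your overall cycle ((\textsc{m}10) $\Rightarrow$ universal $\Rightarrow$ existential $\Rightarrow$ (\textsc{m}10)) matches the paper's, and the easy legs are essentially right: the witness sequence $x_n=0$ on $A\notin\mathcal{I}$, $x_n=n$ on $A^c$ does lie in $\mathscr{B}$ and satisfies \ref{item:c5} and \ref{item:l5}, and feeding it into the assumed universal equivalence is in fact slightly cleaner than the paper's route (which first rules out maximality via Remark \ref{rmk:nostrongercorollary} and then uses a $0$--$1$ valued sequence). One slip: for \ref{item:mSeries} your candidate ``$(-1)^n/n$ perturbed by an absolutely summable sequence'' cannot belong to $\mathscr{C}$, since its image accumulates only at $0$ and is certainly not dense in $\mathbf{R}$; you need an unbounded sequence, e.g.\ an enumeration of $\mathbf{Q}$ (for which non-unconditional convergence is automatic).

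The genuine gap is the direction you yourself flag as ``the main obstacle'': each of \ref{item:m2}, \ref{item:m4}, \ref{item:m2L}, \ref{item:m4L}, \ref{item:mSeries} implies \ref{item:m5}. Your contrapositive via the Talagrand/Jalali--Naini characterization stalls at the decisive step: a single $B\in\mathcal{I}$ covering infinitely many blocks of one fixed interval partition cannot plausibly yield a \emph{non-meager} (let alone comeager) family of $\sigma$ with $\{n: x_{\sigma(n)}\in U_m\}\in\mathcal{I}$, because the $\sigma$ ranging over a comeager set hit $U_m$ along wildly different index sets and no single $B$ controls them all; the quantifiers point the wrong way. The paper instead proves the \emph{direct} implication (its Theorem \ref{thm:newszymon}): if $x$ is nonconvergent and $\{\sigma\in\Sigma:\eta\in\Gamma_{\sigma(x)}(\mathcal{I})\}$ is comeager for a \emph{single} $\eta$, then $\mathcal{I}$ is meager, proved by converting a dense-$G_\delta$ subset of that set into a winning strategy for Player II in Laflamme's game (interleaving the dense open sets $G_k$ with blocks of indices drawn from $E=\{n:x_n\notin U\}$ dictated by Player I's cofinite moves, so that the resulting $\sigma^\star$ witnesses $\bigcup_k F_k=\{n:x_{\sigma^\star(n)}\in U\}\notin\mathcal{I}$), and then invoking Laflamme's theorem that such a strategy exists iff $\mathcal{I}$ is meager. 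This single-point statement suffices because $\Sigma_x(\mathcal{I})\subseteq\{\sigma:\eta\in\Gamma_{\sigma(x)}(\mathcal{I})\}$ for any fixed $\eta\in\mathrm{L}_x=\Gamma_x(\mathcal{I})$, so your concerns about killing the cluster structure ``at every $\eta$ simultaneously'' and about $\sigma[\mathbf{N}]\notin\mathcal{I}$ in the limit-point variants are red herrings: one $\eta$ is enough, and the $\Lambda$ cases reduce to the $\Gamma$ case via $\Lambda_{\sigma(x)}(\mathcal{I})\subseteq\Gamma_{\sigma(x)}(\mathcal{I})$ (the paper's Corollary \ref{cor:newszymonlimit}). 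The series case \ref{item:mSeries} $\Rightarrow$ \ref{item:m5} is handled by the same game argument with $F_k$ built from $\{n:|S_n\sigma(x)|\ge 1\}$; your appeal to ``adapting'' the arguments of the cited papers does not supply this.
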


Considering that real bounded nonconvergent sequences belong to $\mathscr{B}$ and the equivalence \ref{item:m2L} $\Longleftrightarrow$ \ref{item:m5} in Theorem \ref{cor:characterizationmeager}, we obtain the following corollary (which is stated in the abstract):
\begin{cor}\label{cor:boundednonconvergent}
An ideal $\mathcal{I}$ on $\mathbf{N}$ is meager if and only if there exists a real bounded nonconvergent sequence $x$ such that $\Lambda_x(\mathcal{I})=\mathrm{L}_x$ and $\tilde{\Sigma}_x(\mathcal{I})$ is comeager.
\end{cor}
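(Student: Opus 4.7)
The plan is to derive the corollary directly from the equivalence \ref{item:m2L} $\Longleftrightarrow$ \ref{item:m5} in Theorem \ref{cor:characterizationmeager}, using the observation already noted in the excerpt that real bounded nonconvergent sequences belong to $\mathscr{B}$.

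For the reverse implication, I would argue as follows: if a real bounded nonconvergent sequence $x$ with $\Lambda_x(\mathcal{I}) = \mathrm{L}_x$ and $\tilde{\Sigma}_x(\mathcal{I})$ comeager exists, then by Bolzano–Weierstrass $x$ has at least one ordinary accumulation point, so $x \in \mathscr{B}$. The two assumed conditions are precisely \ref{item:l5} and \ref{item:l1}, so \ref{item:m2L} is satisfied and Theorem \ref{cor:characterizationmeager} yields that $\mathcal{I}$ is meager.

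For the forward implication, assuming $\mathcal{I}$ is meager, rather than merely invoking \ref{item:m5} $\Longrightarrow$ \ref{item:m2L} I would exhibit an explicit witness. The key input is that meager ideals are not maximal: by Sierpi\'{n}ski's classical theorem, a free ultrafilter on $\mathbf{N}$ cannot have the Baire property, so no maximal ideal can be meager. Consequently, there exists $B \subseteq \mathbf{N}$ with $B \notin \mathcal{I}$ and $\mathbf{N} \setminus B \notin \mathcal{I}$. Define $x := \mathbf{1}_B$, the indicator sequence of $B$. Since $\mathcal{I} \supseteq \mathrm{Fin}$, both $B$ and its complement are infinite, so $x$ is real, bounded, and nonconvergent, with $\mathrm{L}_x = \{0, 1\}$. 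Enumerating $B$ and $\mathbf{N} \setminus B$ in increasing order yields two constant subsequences whose index sets are $B$ and $\mathbf{N} \setminus B$ respectively, neither in $\mathcal{I}$; this witnesses $0, 1 \in \Lambda_x(\mathcal{I})$, so $\Lambda_x(\mathcal{I}) = \mathrm{L}_x$. Applying Theorem \ref{thm:theoremlimit} (the implication \ref{item:l5} $\Longrightarrow$ \ref{item:l1}) to $x$ with the meager ideal $\mathcal{I}$ then gives that $\tilde{\Sigma}_x(\mathcal{I})$ is comeager, completing the construction.

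The only nontrivial ingredient is the non-maximality of meager ideals, which is a well-known consequence of Sierpi\'{n}ski's theorem; everything else is a matter of unpacking the definitions of $\mathscr{B}$, $\mathrm{L}_x$, and $\Lambda_x(\mathcal{I})$, and citing the equivalences already established in Theorems \ref{thm:theoremlimit} and \ref{cor:characterizationmeager}.
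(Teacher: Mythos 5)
Your proof is correct and follows essentially the same route as the paper, which derives the corollary from the equivalence \ref{item:m2L} $\Longleftrightarrow$ \ref{item:m5} of Theorem \ref{cor:characterizationmeager} together with the observation that real bounded nonconvergent sequences lie in $\mathscr{B}$; your explicit indicator-sequence witness for the forward direction is exactly the construction the paper uses inside its proof of \ref{item:m1} $\implies$ \ref{item:m2}. If anything, spelling out that witness is a small improvement in rigor, since \ref{item:m2L} by itself only produces some $x \in \mathscr{B}$ (not necessarily real and bounded), so the forward direction of the corollary genuinely requires knowing that the witness can be taken to be a bounded real sequence.
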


Note that the definition of $\mathscr{B}$ imposes that each of its elements has at least one accumulation point. This constraint cannot be removed. Indeed, if $x \in \mathscr{A}$ verifies $\mathrm{L}_x=\emptyset$, then $\Gamma_{\sigma(x)}(\mathcal{I})\subseteq \mathrm{L}_{\sigma(x)}\subseteq \mathrm{L}_x=\emptyset$ for all $\sigma \in \Sigma$. Hence $\Sigma_x(\mathcal{I})=\Sigma$, independently of the choice of the ideal $\mathcal{I}$. This would imply that both \ref{item:c1} and \ref{item:c5} hold true, also for nonmeager ideals, and it would provide a counterexample to the equivalence \ref{item:m2} $\Longleftrightarrow$ \ref{item:m5} in Theorem \ref{cor:characterizationmeager}. 

\begin{rmk}\label{rmk:nostrongercorollary}
It may be guessed that \ref{item:m5} is equivalent, e.g., also to the following:
\begin{enumerate}[label={\rm (\textsc{m}\arabic{*})}]
 \setcounter{enumi}{10}
\item \label{item:m1prime} 
There exists a sequence $x \in \mathscr{B}$ such that both  \ref{item:c2} and \ref{item:c5} hold\textup{.}
\end{enumerate}
However we can prove that this is false by the following example. 
Let $\mathcal{I}$ be a maximal ideal. 
Since $\mathcal{I}$ is maximal, there exists a unique $A \in \{2\mathbf{N}+1,2\mathbf{N}+2\}$ such that $A\in \mathcal{I}$. 
Accordingly, let $x$ be the sequence defined by $x_n=n$ if $n \in A$ and $x_n=0$ otherwise. 
Then $x \to_{\mathcal{I}} 0$ and $\Gamma_x(\mathcal{I})=\mathrm{L}_x=\{0\}$, hence $x \in \mathscr{B}$ and \ref{item:c5} holds. 
In addition, 
$$
\Sigma_x(\mathcal{I})=\{\sigma \in \Sigma: \{n \in \mathbf{N}: x_{\sigma(n)}\ge 1\} \in \mathcal{I}\}=\{\sigma \in \Sigma: \sigma^{-1}[A]\in \mathcal{I}\}.
$$
It follows by \cite[Example 2.6]{MR4165727} that $\Sigma_x(\mathcal{I})$ is not meager, hence also \ref{item:c2} holds. 
(Note that the same argument shows that $\Sigma_x(\mathcal{I})$ is not comeager, hence \ref{item:c1} fails.) 
This proves that \ref{item:m1prime} is verified, while \ref{item:m5} does not. (The same example works with $\mathcal{I}$-limit points.)
\end{rmk}


\section{Proof of Theorem \ref{thm:theoremlimit}} \label{sec:proofs}

The following result strenghtens \cite[Lemma 3.3]{MR4165727}.
\begin{lem}\label{lem:keylemma}
Let $x$ be a sequence taking values in a first countable space $X$ and let $\mathcal{I}$ be a meager ideal on $\mathbf{N}$. Then 
$$
\{\sigma \in \Sigma: \eta \in \Lambda_{\sigma(x)}(\mathcal{I})\}
\,\,\,\,\text{ and }\,\,\,\,
\{\pi \in \Pi: \eta \in \Lambda_{\pi(x)}(\mathcal{I})\}
$$
are comeager for each $\eta \in \mathrm{L}_x$. 
\end{lem}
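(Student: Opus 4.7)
The plan is to use Talagrand's characterization of meager ideals to find a partition of $\mathbf{N}$ witnessing meagerness, and then to construct, for a comeager set of $\sigma \in \Sigma$ (resp.\ $\pi \in \Pi$), a specific subsequence of $\sigma(x)$ converging to $\eta$ along an index set not in $\mathcal{I}$. More precisely, I would fix a partition $\mathbf{N}=\bigsqcup_k F_k$ into finite intervals such that, for every $A \in \mathcal{I}$, $F_k \not\subseteq A$ holds for all but finitely many $k$; and, using that $X$ is first countable, a decreasing neighborhood base $(U_j)_{j\ge 1}$ of $\eta$. Since $\eta \in \mathrm{L}_x$, the set $T_j := \{n \in \mathbf{N} : x_n \in U_j\}$ is infinite for every $j$.

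The central objects would be the sets
$$
E_{j,m} := \{\sigma \in \Sigma : \exists\, k \ge m \text{ such that } F_k \subseteq \{n : x_{\sigma(n)} \in U_j\}\}
$$
for $j,m \ge 1$, and their permutation analogues $E_{j,m}^{\Pi}$. The main step is to show that each $E_{j,m}$ is open and dense in $\Sigma$. Openness is immediate, since the condition $F_k \subseteq \{n : x_{\sigma(n)} \in U_j\}$ depends only on the restriction $\sigma|_{F_k}$. For density, given a basic open set determined by prescribed values $\sigma(1),\ldots,\sigma(N)$, I would pick $k \ge m$ with $\min F_k > N$, choose on $F_k$ strictly increasing values from the infinite set $T_j$ that are sufficiently large to allow filling the gap between $\sigma(N)$ and $\sigma(\min F_k)$ with consecutive integers, and complete $\sigma$ arbitrarily thereafter (staying strictly increasing).

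Granting this, $\bigcap_{j,m} E_{j,m}$ is comeager in $\Sigma$. For any $\sigma$ in this intersection I would recursively extract an increasing sequence $k_1 < k_2 < \cdots$ with $F_{k_j} \subseteq \{n : x_{\sigma(n)} \in U_j\}$, and set $B := \bigcup_j F_{k_j}$. Two verifications close the argument: first, $B \notin \mathcal{I}$, because otherwise Talagrand's property would yield $F_{k_j} \not\subseteq B$ eventually, contradicting the construction; second, $(x_{\sigma(n)})_{n \in B}$ converges to $\eta$, since for each $m$ every $n \in B \setminus (F_{k_1}\cup\cdots\cup F_{k_{m-1}})$ lies in some $F_{k_j}$ with $j \ge m$, hence $x_{\sigma(n)} \in U_j \subseteq U_m$. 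Enumerating $B$ in increasing order thus gives $\tau \in \Sigma$ with $\tau[\mathbf{N}]=B \notin \mathcal{I}$ and $x_{\sigma(\tau(n))} \to \eta$, i.e., $\eta \in \Lambda_{\sigma(x)}(\mathcal{I})$.

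The permutation case is parallel, but the density of $E_{j,m}^{\Pi}$ is the main additional difficulty: one must extend a finite partial bijection on $[1,N]$ to a full bijection of $\mathbf{N}$ while placing values from $T_j$ at positions in some $F_k$ with $k \ge m$. I would handle this by a standard back-and-forth enumeration, alternating between assigning the next unused element of $\mathbf{N}$ as an image and the next unused position as a preimage, while reserving room to insert, at positions in $F_k$, increasing elements of $T_j$ not already used; the infinitude of $T_j$ and of its complement guarantee this is always possible. Once density is established, the rest of the argument is identical to the subsequence case.
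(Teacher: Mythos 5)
Your proposal is correct and follows essentially the same route as the paper: both invoke the Talagrand--Jalali-Naini partition of $\mathbf{N}$ into finite intervals witnessing meagerness, define open dense sets of $\sigma$ that force some whole block $F_k$ with $k\ge m$ of \emph{positions} into $\{n: x_{\sigma(n)}\in U_j\}$, and then extract an index set $B\notin\mathcal{I}$ (a union of infinitely many blocks) along which $\sigma(x)$ converges to $\eta$. One cosmetic point: in the permutation back-and-forth you neither need nor are guaranteed that the complement of $T_j$ is infinite; completing the finite partial injection to a bijection only requires infinitely many as-yet-unused integers, which is automatic.
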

\begin{proof}
Assume that $\mathrm{L}_x\neq \emptyset$, otherwise there is nothing to prove. Fix $\eta \in \mathrm{L}_x$ and let $(U_{\eta,m})$ be a decreasing local base at $\eta$. Since $\mathcal{I}$ is a meager ideal, it follows by \cite[Theorem 2.1]{MR579439} that there exists an increasing sequence $(\iota_n)$ of positive integers such that $A\notin \mathcal{I}$ whenever $I_n:=[\iota_n,\iota_{n+1}) \subseteq A$ for infinitely many $n$. For each $m \in \mathbf{N}$ define
\begin{equation}\label{eq:Smeta}
S_m(\eta):=\bigcup\nolimits_{k\ge m}\{\sigma \in \Sigma: 
x_{\sigma(n)} \in U_{\eta,m} \text{ for all }n \in I_k\}.
\end{equation}
Since $\bigcap\nolimits_m S_m(\eta)$ is contained in $\{\sigma \in \Sigma: \eta \in \Lambda_{\sigma(x)}(\mathcal{I})\}$ by the previous observation, it will be sufficient to show that each $S_m(\eta)$ is comeager. To this aim, fix $m \in \mathbf{N}$ and let $C=\{\sigma \in \Sigma: \sigma(1)=a_1,\ldots,\sigma(k)=a_k\}$ be a basic open set, for some positive integers $a_1<\cdots<a_k$. Note that the set $E:=\{n \in \mathbf{N}: x_{n}\in U_{\eta,m}\}$ is infinite since $\eta \in \mathrm{L}_x$. At this point, it is enough to see that 
$$
\{\sigma \in C: \sigma(n)=e_n \text{ for all }n\text{ with }k< n<\iota_{a_k+m}\},
$$
for some increasing values $e_n \in E\setminus [1,a_k]$, 
is a nonempty open set contained in $C \cap S_m(\eta)$. This proves that $S_m(\eta)$ is comeager, completing the first part. 
The second part of the proof about permutations goes verbatim. 
\end{proof}

We are ready for the proof of Theorem \ref{thm:theoremlimit}.
\begin{proof}[Proof of Theorem \ref{thm:theoremlimit}] 
\ref{item:l1} $\implies$ \ref{item:l2} It is obvious.

\ref{item:l2} $\implies$ \ref{item:l5} Suppose that there exists $\ell \in \mathrm{L}_x \setminus \Lambda_x(\mathcal{I})$. Then $\tilde{\Sigma}_x(\mathcal{I})$ is contained in $\Sigma\setminus \{\sigma \in \Sigma: \eta \in \Lambda_{\sigma(x)}(\mathcal{I})\}$, which is meager by Lemma \ref{lem:keylemma}.

\ref{item:l5} $\implies$ \ref{item:l1} 
Suppose that $\mathrm{L}_x \neq \emptyset$, otherwise the claim is trivial. 
Let $\mathscr{L}$ be a countable dense subset of $\mathrm{L}_x$, and denote by $(U_{\eta,m}: m\ge 1)$ a decreasing local base at each $\eta \in X$. 
As it is shown in the proof of Lemma \ref{lem:keylemma}, the set $S_{m,\eta}$ defined in \eqref{eq:Smeta} is comeager for all $m \in \mathbf{N}$ and $\eta \in \mathrm{L}_x$, therefore also 
$
S:=\bigcap\nolimits_{m \ge 1}\bigcap\nolimits_{\eta \in \mathscr{L}}S_{m}(\eta)
$ is comeager. Note that, equivalently, 
\begin{equation}\label{eq:defS}
S=
\{\sigma \in \Sigma: \forall \eta \in \mathscr{L}, \exists^\infty k \in \mathbf{N}, \forall n \in I_k, x_{\sigma(n)} \in U_{\eta,m}\}.
\end{equation}
Since $S\subseteq \{\sigma \in \Sigma: \eta \in \Lambda_{\sigma(x)}(\mathcal{I}) \text{ for all }\eta \in \mathscr{L}\}$, we have that $\mathscr{L}\subseteq \Lambda_{\sigma(x)}(\mathcal{I})$ for all $\sigma \in S$. 
To conclude the proof, we claim that $S$ is contained in $\tilde{\Sigma}_x(\mathcal{I})$. 

To this aim, fix $\sigma \in S$. On the one hand, we have 
$$
\Lambda_{\sigma(x)}(\mathcal{I}) \subseteq \mathrm{L}_{\sigma(x)}\subseteq \mathrm{L}_x=\Lambda_x(\mathcal{I}).
$$
Conversely, fix $\eta \in \mathrm{L}_x$. Since $\mathscr{L}$ is dense, there exists a sequence $(\eta_t)$ in $\mathscr{L}$ which is convergent to $\eta$. Without loss of generality we can assume that $\eta_t \in U_{\eta,t}$ for all $t \in \mathbf{N}$. 
At this point, for each $t\in \mathbf{N}$, there exists $m_t \in \mathbf{N}$ such that $U_{\eta_t,m_t}\subseteq  U_{\eta,t}$. By the explicit expression \eqref{eq:defS}, $S$ is contained in 
$$
\{\sigma \in \Sigma: \forall t \in \mathbf{N}, \exists^\infty k \in \mathbf{N}, \forall n \in I_k, x_{\sigma(n)} \in U_{\eta_t,m_t}\}
$$ 
It follows that the subsequence $\sigma(x)$ has a subsubsequence $\hat{\sigma}(\sigma(x))$ which is convergent to $\eta$ and such that $\hat{\sigma}[\mathbf{N}]$ contains infinitely many intervals $I_k$, hence $\eta$ is an $\mathcal{I}$-limit point of $\sigma(x)$. Therefore  
$$
\Lambda_x(\mathcal{I})=\mathrm{L}_x \subseteq \Lambda_{\sigma(x)}(\mathcal{I}).
$$ 
This proves that $S\subseteq \tilde{\Sigma}_x(\mathcal{I})$, completing the proof. 

The proof of \ref{item:l3} $\implies$ \ref{item:l4} $\implies$ \ref{item:l5} $\implies$ \ref{item:l3} goes verbatim.
\end{proof}

\section{Proof of Theorem \ref{cor:characterizationmeager}}

The following result will be the key tool in the proof of the characterization of meager ideals.
\begin{thm}\label{thm:newszymon}
Let $\mathcal{I}$ be an ideal on $\mathbf{N}$ and $x$ be a nonconvergent sequence in a topological space $X$ 
such that 
there exists $\eta \in X$ for which 
\begin{equation}\label{eq:conditionaccuulation}
\{\sigma \in \Sigma: \eta \in \Gamma_{\sigma(x)}(\mathcal{I})\} 
\,\,\,\text{ or }\,\,\,
\{\pi \in \Pi: \eta \in \Gamma_{\pi(x)}(\mathcal{I})\}
\end{equation}
is comeager. 
Then $\mathcal{I}$ is meager. 
\end{thm}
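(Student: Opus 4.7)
The plan is to reduce the statement to a Baire category question about a continuous open surjection $\phi$ and then transfer comeagerness from $\Sigma$ (resp.\ $\Pi$) to $2^{\mathbf{N}}$ through a Banach--Mazur game argument.

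I would treat the subsequence case in detail, the permutation case being entirely parallel. Assuming that $T := \{\sigma \in \Sigma : \eta \in \Gamma_{\sigma(x)}(\mathcal{I})\}$ is comeager, $T$ is nonempty, so some subsequence of $x$ has $\eta$ as an ordinary accumulation point and hence $\eta \in \mathrm{L}_x$. Because $x$ is nonconvergent, there will exist an open neighborhood $V$ of $\eta$ such that $A := \{n \in \mathbf{N} : x_n \in V\}$ has both $A$ and $A^c$ infinite. Using $V$ in the definition of $\mathcal{I}$-cluster point, every $\sigma \in T$ satisfies $\sigma^{-1}(A) = \{n : x_{\sigma(n)} \in V\} \notin \mathcal{I}$, so $T \subseteq \{\sigma \in \Sigma : \sigma^{-1}(A) \notin \mathcal{I}\}$, and the latter set is comeager.

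Next I would define $\phi : \Sigma \to 2^{\mathbf{N}}$ by $\phi(\sigma) := \sigma^{-1}(A)$, identified with its characteristic function, and verify that $\phi$ is continuous, surjective, and open. Continuity is immediate; the key point is to show that the image of a basic open $\{\sigma : \sigma(j) = s(j),\, j \leq k\}$ equals precisely the cylinder $\{y \in 2^{\mathbf{N}} : y(j) = \chi_A(s(j)),\, j \leq k\}$. One inclusion is obvious, and the reverse uses that $A$ and $A^c$ are both infinite, so any prescribed initial segment of $\phi(\sigma)$ can be realized by a suitable strictly increasing extension of $s$.

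Since $\phi^{-1}(2^{\mathbf{N}} \setminus \mathcal{I})$ equals the comeager set above, Player~II has a winning strategy in the Banach--Mazur game on $\Sigma$ with target $\phi^{-1}(2^{\mathbf{N}} \setminus \mathcal{I})$. I would transfer this to a winning strategy for Player~II in the Banach--Mazur game on $2^{\mathbf{N}}$ with target $2^{\mathbf{N}} \setminus \mathcal{I}$: each time Player~I plays an open $V \subseteq 2^{\mathbf{N}}$, pick a basic open $U \subseteq \Sigma$ with $\phi(U) \subseteq V$ (possible by surjectivity and openness of $\phi$) as the simulated Player~I move, apply the $\Sigma$-winning strategy to obtain $U' \subseteq U$, and respond in the $2^{\mathbf{N}}$-game with $\phi(U')$, which is open and contained in $V$. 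A generic play produces $\sigma^\ast$ in the $\Sigma$-intersection with $\phi(\sigma^\ast) \in 2^{\mathbf{N}} \setminus \mathcal{I}$ lying in the $2^{\mathbf{N}}$-intersection, so $\mathcal{I}$ is meager.

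The permutation case goes the same way with $\psi : \Pi \to Y$, $\psi(\pi) := \pi^{-1}(A)$, where $Y := \{S \subseteq \mathbf{N} : S \text{ and } S^c \text{ are both infinite}\}$ is a comeager $G_\delta$ in $2^{\mathbf{N}}$; the same computation shows $\psi$ is continuous, open, and surjective onto $Y$, and the analogous transfer yields $\mathcal{I} \cap Y$ meager in $Y$, hence $\mathcal{I}$ meager in $2^{\mathbf{N}}$. The main delicate point throughout will be the openness of $\phi$ (resp.\ $\psi$); this is precisely where both hypotheses are used in essential form, since $\eta \in \mathrm{L}_x$ is what allows $A$ to be chosen infinite while the nonconvergence of $x$ is what allows $A^c$ to be chosen infinite.
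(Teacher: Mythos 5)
Your argument is correct, but it reaches the conclusion by a genuinely different route than the paper. The paper works directly with Laflamme's game: using a neighborhood $U$ of $\eta$ with $E=\{n:x_n\notin U\}$ infinite, it recursively builds a decreasing chain of basic open sets, alternating between filling the positions forced by Player I's cofinite moves with indices from $E$ and descending into the dense open sets witnessing comeagerness; the resulting $\sigma^\star$ lies in the comeager set, so $\{n:x_{\sigma^\star(n)}\in U\}=\bigcup_k F_k\notin\mathcal{I}$, giving Player II a winning strategy and hence meagerness of $\mathcal{I}$ by \cite[Theorem 2.12]{MR1367134}. You instead isolate the combinatorial core as the cleaner statement that $\{\sigma\in\Sigma:\sigma^{-1}(A)\notin\mathcal{I}\}$ comeager (for $A$ with $A$ and $A^c$ infinite) forces $\mathcal{I}$ to be meager, and prove it by pushing comeagerness forward along the continuous open surjection $\phi(\sigma)=\sigma^{-1}(A)$ via a Banach--Mazur transfer. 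Your reduction to this statement is sound: comeagerness of $T$ gives $\eta\in\mathrm{L}_x$ (so $A$ is infinite), nonconvergence gives $A^c$ infinite, and the verification that $\phi$ maps basic open sets onto cylinders is exactly right. The gain of your approach is modularity and reusability of the lemma; the cost is that you must invoke the nontrivial direction of the Banach--Mazur/Oxtoby theorem (winning strategy for II implies comeager), which fortunately holds for arbitrary target sets, with no Baire-property assumption on $\mathcal{I}$ --- a point worth stating explicitly since a priori $\mathcal{I}$ need not have the Baire property.

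Two places in the transfer need to be written more carefully than ``a generic play produces $\sigma^\ast$.'' First, the winning condition for II in the game on $2^{\mathbf{N}}$ requires the \emph{entire} intersection of the played sets to lie in $2^{\mathbf{N}}\setminus\mathcal{I}$, not merely the single point $\phi(\sigma^\ast)$; you should have II shrink so that her $n$-th move is a cylinder of length at least $n$ (and the $\Sigma$-moves fix initial segments of length at least $n$), which makes both intersections singletons and closes this gap. Second, when simulating the $\Sigma$-game you must not shrink Player II's responses coming from the winning strategy (otherwise the simulated run no longer follows that strategy); the shrinking has to be absorbed into the simulated moves of Player I, which are under your control. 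Both fixes are routine, and with them your proof is complete. Alternatively, you could replace the game transfer by the general fact that a continuous open surjection between Polish spaces pulls back exactly the meager sets (valid for arbitrary, not necessarily Baire-measurable, subsets), which your map $\phi$ satisfies.
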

\begin{proof}
First, assume that we can fix $\eta \in X$ such that $S:=\{\sigma \in \Sigma: \eta \in \Gamma_{\sigma(x)}(\mathcal{I})\}$ is comeager. Hence there exists a decreasing sequence $(G_n)$ of dense open subsets of $\Sigma$ such that $\bigcap\nolimits_n G_n \subseteq S$. In addition, since $x$ is not convergent to $\eta$, there is a neighborhood $U$ of $\eta$ such that $E:=\{n \in \mathbf{N}: x_n \notin U\}$ is infinite. 

At this point, consider the following game 
defined by Laflamme in \cite{MR1367134}: Players I and II choose alternatively subsets $C_1,F_1,C_2,F_2,\ldots$ of $\mathbf{N}$, where the sets $C_1\supseteq C_2\supseteq \ldots$, which are chosen by Player I, are cofinite and the sets $F_k\subseteq C_k$, which are chosen by Player II, are finite. 
Player II is declared to be the winner if $\bigcup\nolimits_k F_k \notin \mathcal{I}$. 
Note that we may suppose without loss of generality that 
$F_k\cap C_{k+1}=\emptyset$ and 
$C_k=[c_k,\infty)$ 
for all $k \in \mathbf{N}$. 
Thanks to \cite[Theorem 2.12]{MR1367134}, Player II has a winning strategy if and only if $\mathcal{I}$ is meager. 
Hence, the rest of the proof consists in showing that Player II has a winning strategy. 

To this aim, we will define recursively, together with the description of the strategy of Player II, also a decreasing sequence of basic open sets $A_1\supseteq B_1 \supseteq A_2\supseteq B_2 \supseteq \cdots$ in $\Sigma$ (recall that a basic open set in $\Sigma$ is a cylinder of the type $D=\{\sigma \in \Sigma: \sigma(1)=a_1,\ldots,\sigma(n)=a_n\}$ for some positive integers $a_1<\cdots<a_n$, and we set $m(D):=a_n$). 
Suppose that the sets $C_1,F_1,\ldots,C_{k-1},F_{k-1},C_k\subseteq \mathbf{N}$ have been already chosen and that the open sets $A_1, B_1, \ldots, A_{k-1}, B_{k-1} \subseteq \Sigma$ have already been defined, for some $k \in \mathbf{N}$, where we assume by convention that $B_0:=\Sigma$ and $m(\Sigma):=0$. 
Then we define the sets $A_k, B_k$, and $F_k$ as it follows: 
\begin{enumerate}[label={\rm (\roman{*})}]
\item $A_k:=\{\sigma \in B_{k-1}: \sigma(n)=e_n \text{ for all }n\text{ with }m(B_{k-1})<n<c_k\}$ for some increasing values $e_n \in E$ which are bigger than $\sigma(m(B_{k-1}))$ (note that this is possible since $E$ is infinite);
\item $B_k$ is a nonempty basic open set contained in $G_k \cap A_k$ (note that this is possible since $G_k$ is open dense and $A_k$ is nonempty open);
\item $F_k:=\{n \in \mathbf{N}: x_{\sigma(n)} \in U\} \cap [c_k, m(B_k)]$ (note that this is a finite set and it is possibly empty). 
\end{enumerate} 
We obtain by construction that there exists $\sigma^\star \in \Sigma$ such that 
$$
\sigma^\star \in \bigcap\nolimits_k B_k \subseteq \bigcap\nolimits_k G_k\subseteq S,
$$
so that $\eta$ is an $\mathcal{I}$-cluster point of the subsequence $\sigma^\star(x)$, which implies that $\{n \in \mathbf{N}: x_{\sigma^\star(n)} \in U\} \notin \mathcal{I}$. 
At the same time, by the definitions above we get 
$$
\{n \in \mathbf{N}: x_{\sigma^\star(n)} \in U\} 
=\bigcup\nolimits_k \{n \in [c_k, m(B_k)]: x_{\sigma^\star(n)} \in U\}
=\bigcup\nolimits_k F_k.
$$
This proves that Player II has a winning strategy, concluding the first part of the proof. 

For the second part, 
recall that a basic open set in $\Pi$ is a cylinder of the type $D=\{\pi \in \Pi: \pi(1)=a_1,\ldots,\pi(n)=a_n\}$ for some distinct $a_1,\ldots,a_n \in \mathbf{N}$, and set $m(D):=\max\{a_1,\ldots,a_n\}$. 
Minor modifications are necessary in the definitions of the corresponding sets $A_k$ and $B_k$ as it follows: 
\begin{enumerate}[label={\rm (\roman{*}$^\prime$)}]
\item $A_k:=\{\pi \in B_{k-1}: \pi(n)=e_n \text{ for all }n\text{ with }m(B_{k-1})<n<c_k\}$ for the smallest possible values of $e_n \in E$ which have not been chosen before in the previous steps; 
\item $B_k$ is a nonempty basic open set contained in $G_k \cap A_k$ with the additional condition that if $\pi \in B_k$ then $\{\pi(1),\ldots,\pi(m(B_k))\}$ coincides with $\{1,\ldots,m(B_k)\}$ (this is still possible replacing $B_k$, if necessary, with a smaller subset which verifies this condition). 
\end{enumerate} 
It follows by construction that $\bigcap\nolimits_k B_k$ contains an element $\pi^\star$ which is really a permutation on $\mathbf{N}$. Finally, the proof of the permutations case goes along the same lines as above. 
\end{proof}

\begin{cor}\label{cor:newszymonlimit}
Let $\mathcal{I}$ be an ideal on $\mathbf{N}$ and $x$ be a nonconvergent sequence in a topological space $X$  
such that 
there exists $\eta \in X$ for which 
$$
\{\sigma \in \Sigma: \eta \in \Lambda_{\sigma(x)}(\mathcal{I})\} 
\,\,\,\text{ or }\,\,\,
\{\pi \in \Pi: \eta \in \Lambda_{\pi(x)}(\mathcal{I})\}
$$
is comeager. Then $\mathcal{I}$ is meager. 
\end{cor}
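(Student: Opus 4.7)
The plan is to derive this corollary directly from Theorem \ref{thm:newszymon} by exploiting the universal containment $\Lambda_y(\mathcal{I}) \subseteq \Gamma_y(\mathcal{I})$, which holds for any sequence $y$ in any topological space and any ideal $\mathcal{I}$ (this is recalled in the paragraph right before the definition of $\tilde{\Sigma}_x(\mathcal{I})$).

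First I would observe that, applying this containment to the subsequence $\sigma(x)$ for each $\sigma \in \Sigma$, we obtain the pointwise-in-$\sigma$ inclusion
\[
\{\sigma \in \Sigma : \eta \in \Lambda_{\sigma(x)}(\mathcal{I})\} \subseteq \{\sigma \in \Sigma : \eta \in \Gamma_{\sigma(x)}(\mathcal{I})\}.
\]
Likewise, applying it to $\pi(x)$ for $\pi \in \Pi$ yields the analogous inclusion between the permutation sets. Since every superset of a comeager set in a Baire space is comeager, the hypothesis that one of the $\Lambda$-sets in the statement is comeager passes directly to the corresponding $\Gamma$-set in the hypothesis of Theorem \ref{thm:newszymon}.

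Therefore, under the assumption that one of the $\Lambda$-sets is comeager, the corresponding set in \eqref{eq:conditionaccuulation} is comeager as well. Since $x$ is nonconvergent by hypothesis, Theorem \ref{thm:newszymon} applies and yields that $\mathcal{I}$ is meager, which completes the proof.

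There is no genuine obstacle here beyond noticing the right containment; the content of the corollary is entirely carried by Theorem \ref{thm:newszymon}, and the step from $\mathcal{I}$-limit points to $\mathcal{I}$-cluster points is a one-line monotonicity argument.
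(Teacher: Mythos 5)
Your proposal is correct and coincides with the paper's own one-line argument: the paper likewise deduces the corollary from Theorem \ref{thm:newszymon} via the containment $\Lambda_{y}(\mathcal{I})\subseteq\Gamma_{y}(\mathcal{I})$, so that the comeager $\Lambda$-set forces the corresponding $\Gamma$-set in \eqref{eq:conditionaccuulation} to be comeager. No further comment is needed.
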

\begin{proof}
This follows by Theorem \ref{thm:newszymon} and the fact that every $\mathcal{I}$-limit point is necessarily an $\mathcal{I}$-cluster point. 
\end{proof}

Now, we show that the converse of Theorem \ref{thm:poplawski} holds, provided that the sequence $x$ has a dense image. 
\begin{thm}\label{thm:converseseries}
Let $x \in \mathscr{C}$ such that $\Sigma_{S,x}(\mathcal{I})$ is meager. Then $\mathcal{I}$ is meager.
\end{thm}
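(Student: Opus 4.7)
The plan is to prove the contrapositive via the Laflamme game used in the proof of Theorem \ref{thm:newszymon}: by \cite[Theorem 2.12]{MR1367134} it is enough to produce a winning strategy for Player II. Since $\Sigma_{S,x}(\mathcal{I})$ is meager, I fix a decreasing sequence $(G_n)$ of dense open subsets of $\Sigma$ with $\bigcap\nolimits_n G_n \subseteq \Sigma\setminus \Sigma_{S,x}(\mathcal{I})$; unwinding the definition of $\ell_\infty(\mathcal{I})$, every $\sigma^\star\in\bigcap\nolimits_n G_n$ satisfies $\{n\in\mathbf{N}:|S_n\sigma^\star(x)|>M\}\notin \mathcal{I}$ for each $M\in\mathbf{N}$, the case $M=1$ being all I need.

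Following the template of the proof of Theorem \ref{thm:newszymon}, I recursively construct basic open sets $A_1\supseteq B_1\supseteq A_2\supseteq B_2\supseteq \cdots$ in $\Sigma$ with $B_k\subseteq G_k$ and $m(B_k)\ge k$ (so that the unique $\sigma^\star\in\bigcap\nolimits_k B_k$ is defined on all of $\mathbf{N}$). The crucial modification is in step (i): rather than picking the extension values $e_n$ (for $n$ in the gap $(m(B_{k-1}),c_k)$) in the complement of a neighborhood of $\eta$, I now choose them so that $|S_n\sigma(x)|\le 1$ for every $n$ in the gap. The hypothesis $x\in\mathscr{C}$ is exactly what allows this: since the image of $x$ is dense in $\mathbf{R}$, for every target $t\in\mathbf{R}$ and every threshold $N\in\mathbf{N}$ there are infinitely many $e>N$ with $|x_e-t|<1/2$, so at each position $n$ of the gap I pick $e_n>e_{n-1}$ with $x_{e_n}$ within $1/2$ of $-S_{n-1}\sigma(x)$, which forces $|S_n\sigma(x)|\le 1/2$. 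Having defined $A_k$, I take any nonempty basic open $B_k\subseteq G_k\cap A_k$ with $m(B_k)\ge k$, and let Player II play
$$
F_k := \{n\in [c_k,m(B_k)]: |S_n\sigma(x)|>1\}.
$$

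The strategy wins because every $n\in\mathbf{N}$ outside $\bigcup\nolimits_k[c_k,m(B_k)]$ lies in some extension gap, where $|S_n\sigma^\star(x)|\le 1$ by construction; this yields the identity
$$
\bigcup\nolimits_k F_k = \{n\in\mathbf{N}:|S_n\sigma^\star(x)|>1\},
$$
and the right-hand side is not in $\mathcal{I}$ by the choice of $(G_n)$. The main obstacle, and the only nontrivial point, is verifying that the recursive extension can always be carried out: at every step one must find $e_n$ strictly increasing, greater than $\sigma(m(B_{k-1}))$, and with $x_{e_n}$ in a prescribed interval of length $1$, which is exactly what the density of the image of $x$ guarantees. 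The degenerate case of an empty extension gap (when $c_k\le m(B_{k-1})+1$) is handled by setting $A_k:=B_{k-1}$, and the above identity continues to hold trivially over that step.
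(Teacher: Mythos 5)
Your proof is correct and takes essentially the same route as the paper's: the identical Laflamme-game strategy from Theorem \ref{thm:newszymon}, with the extension values $e_n$ chosen via the density of the image of $x$ so that the partial sums stay small on the gaps, and with $F_k$ collecting the indices in $[c_k,m(B_k)]$ where $|S_n\sigma(x)|$ is large. The only differences are cosmetic (a strict versus non-strict inequality in the threshold, and your explicit handling of empty gaps and of the requirement that $m(B_k)\ge k$, details the paper leaves implicit).
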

\begin{proof}
We follow the same lines of the proof of Theorem \ref{thm:newszymon}. 
Suppose that $\Sigma\setminus \Sigma_{S,x}(\mathcal{I})$ is comeager, hence it contains $\bigcap_n G_n$, where $(G_n)$ is a decreasing sequence of dense open sets in $\Sigma$. 
Using the same Laflamme game, we modify the construction of the sets $A_k$ and $F_k$ as it follows: 
\begin{enumerate}[label={\rm (\roman{*}$^{\prime\prime}$)}]
\item $A_k:=\{\sigma \in B_{k-1}: \sigma(n)=e_n \text{ for all }n\text{ with }m(B_{k-1})<n<c_k\}$, where the increasing values $e_n$ are chosen such that $|S_n\sigma(x)|<1$ 
(note that this is possible since $x$ has a dense image);
\end{enumerate}
\begin{enumerate}[label={\rm (\roman{*}$^{\prime\prime}$)}]
\setcounter{enumi}{2}
\item $F_k=:\{n \in \mathbf{N}: |S_n\sigma(x)|\ge 1\} \cap [c_k, m(B_k)]$.
\end{enumerate}
Similarly, we obtain that there exists $\sigma^\star \in \bigcap_k B_k \subseteq \Sigma\setminus \Sigma_{S,x}(\mathcal{I})$, so that $S\sigma^\star(x)$ is not $\mathcal{I}$-bounded. It follows that 
\begin{displaymath}
\begin{split}
\bigcup\nolimits_k F_k
=\bigcup\nolimits_k \{n \in [c_k, m(B_k)]: |S_n\sigma^\star(x)|\ge 1\}=
\{n \in \mathbf{N}: |S_n\sigma^\star(x)|\ge 1\}\notin \mathcal{I},
\end{split}
\end{displaymath}
which proves that player II has a winning strategy.
\end{proof}

We can finally proceed to the proof of Theorem \ref{cor:characterizationmeager}.
\begin{proof}[Proof of Theorem \ref{cor:characterizationmeager}] 
\ref{item:m1} $\implies$ \ref{item:m2} 
First, note that the ideal $\mathcal{I}$ cannot be maximal: indeed, in the opposite, thanks to Remark \ref{rmk:nostrongercorollary}, there exists a real sequence $x$ such that \ref{item:c5} holds and \ref{item:c1} does not. Since $\mathcal{I}$ has to be not maximal, there exists a set $A\notin \mathcal{I}$ such that $A^c \not \in \mathcal{I}$. At this point, it follows that condition \ref{item:c5} holds for the real sequence $x$ defined by $x_n=1$ if $n \in A$ and $x_n=0$ otherwise. Hence \ref{item:c1} holds too by \ref{item:m1}, proving the implication.

\smallskip

\ref{item:m2} $\implies$ \ref{item:m5} Assume that $\Gamma_{x}(\mathcal{I})=\mathrm{L}_x \neq \emptyset$. Then 
$$
\Sigma_x(\mathcal{I})
=\{\sigma \in \Sigma: \Gamma_{\sigma(x)}(\mathcal{I})=\mathrm{L}_x\}
=\bigcap\nolimits_{\eta \in \mathrm{L}_x}\{\sigma \in \Sigma: \eta \in \Gamma_{\sigma(x)}(\mathcal{I})\}.
$$
Hence \eqref{eq:conditionaccuulation} holds and the implication follows by Theorem \ref{thm:newszymon}.

\smallskip

\ref{item:m5} $\implies$ \ref{item:m1} It follows by Theorem \ref{thm:theoremcluster}. 

\medskip

The proof of \ref{item:m3} $\implies$ \ref{item:m4} $\implies$ \ref{item:m5} $\implies$ \ref{item:m3} goes verbatim as above.

\medskip

The proofs of 
\ref{item:m1L} $\implies$ \ref{item:m2L} $\implies$ \ref{item:m5} $\implies$ \ref{item:m1L} 
and 
\ref{item:m3L} $\implies$ \ref{item:m4L} $\implies$ \ref{item:m5} $\implies$ \ref{item:m3L} 
go along the same lines, replacing Theorem \ref{thm:theoremcluster} and Theorem \ref{thm:newszymon} with Theorem \ref{thm:theoremlimit} and Corollary \ref{cor:newszymonlimit}, respectively. 

\medskip

The equivalence \ref{item:mSeries} $\Longleftrightarrow$ \ref{item:m5} follows by Theorem \ref{thm:poplawski} and Theorem \ref{thm:converseseries}.
\end{proof}


\section{Concluding Remarks}

Let us identify each $\sigma \in \Sigma$ with the real number $\sum_{n\ge 1} 2^{-\sigma(n)}$. This provides us a homeomorphism $h:\Sigma\to (0,1]$. Denote by $\lambda$ the Lebesgue measure on $(0,1]$ and let
$\hat{\lambda}:=\lambda \circ h$ be its pushforward on $\Sigma$. 

At this point, one may hope in a measure analogue of Theorem \ref{cor:characterizationmeager}. However, for the subsequences case, this does not seem to be possible as it follows from the next two examples: 
(i) the ideal $\mathcal{I}:=\{A\subseteq \mathbf{N}: \sum_{a \in A}1/a<\infty\}$ is $F_\sigma$ and, thanks to \cite[Proposition 2.3 and Theorem 3.1]{MR3879311}, we have $\hat{\lambda}(\Sigma_x(\mathcal{I}))=1$ for all real sequences $x$; (ii) the Fubini sum $\mathcal{P}(\mathbf{N})\oplus \mathrm{Fin}$, which can be identified with $\mathcal{J}:=\{A\subseteq \mathbf{N}: |A\cap (2\mathbf{N}+1)|<\infty\}$, is also $F_\sigma$ and, thanks to \cite[Example 2]{MR3568092}, there exists a real sequence $x$ such that $\hat{\lambda}(\Sigma_x(\mathcal{J}))=0$. 

On the other hand, a [non]analogue for the permutations is more difficult to obtain due to the lack of a natural candidate for a measure on $\Pi$. 
Indeed, since $\Pi$ is a Polish group which is not locally compact, we cannot speak about Haar measure. 
An alternative could be to consider the notion of \emph{prevalent set} as introduced by Christensen in \cite{MR326293}: a set $S\subseteq \Pi$ is called prevalent if it is universally measurable (i.e., measurable with respect to every complete probability measure on $\Pi$ that measures all Borel subsets of $\Pi$) and there exists a (not necessarily invariant) Borel probability measure $\mu$ over $\Pi$ such that $\mu(\zeta S)=1$ for all permutations $\zeta \in \Pi$, cf. also \cite{MR1271480}. 
We leave as open question for the interested reader to check whether $\Pi_x(\mathcal{I})$ and/or $\tilde{\Pi}_x(\mathcal{I})$ are prevalent sets whenever $\mathcal{I}$ is a meager ideal.

\bigskip

\bibliographystyle{amsplain}
\bibliography{subsequences}

\providecommand{\MR}[1]{}
\providecommand{\bysame}{\leavevmode\hbox to3em{\hrulefill}\thinspace}
\providecommand{\MR}{\relax\ifhmode\unskip\space\fi MR }
\providecommand{\MRhref}[2]{%
  \href{http://www.ams.org/mathscinet-getitem?mr=#1}{#2}
}
\providecommand{\href}[2]{#2}
\begin{thebibliography}{10}

\bibitem{MR3568092}
M.~Balcerzak, Sz. G{\l}\c{a}b, and A.~Wachowicz, \emph{Qualitative properties
  of ideal convergent subsequences and rearrangements}, Acta Math. Hungar.
  \textbf{150} (2016), no.~2, 312--323. \MR{3568092}

\bibitem{MR3883171}
M.~Balcerzak and P.~Leonetti, \emph{On the relationship between ideal cluster
  points and ideal limit points}, Topology Appl. \textbf{252} (2019), 178--190.
  \MR{3883171}

\bibitem{MR4165727}
\bysame, \emph{The {B}aire category of subsequences and permutations which
  preserve limit points}, Results Math. \textbf{75} (2020), no.~4, Paper No.
  171, 14. \MR{4165727}

\bibitem{MR3712964}
M.~Balcerzak, M.~Pop{\l}awski, and A.~Wachowicz, \emph{The {B}aire category of
  ideal convergent subseries and rearrangements}, Topology Appl. \textbf{231}
  (2017), 219--230. \MR{3712964}

\bibitem{MR3989357}
\bysame, \emph{Ideal convergent subseries in {B}anach spaces}, Quaest. Math.
  \textbf{42} (2019), no.~6, 765--779. \MR{3989357}

\bibitem{MR2735533}
A.~Bartoszewicz, Sz. G{\l}{\c{a}}b, and A.~Wachowicz, \emph{Remarks on ideal
  boundedness, convergence and variation of sequences}, J. Math. Anal. Appl.
  \textbf{375} (2011), no.~2, 431--435. \MR{2735533}

\bibitem{MR9997}
R.~C. Buck, \emph{Limit points of subsequences}, Bull. Amer. Math. Soc.
  \textbf{50} (1944), 395--397. \MR{9997}

\bibitem{MR326293}
J.~P.~R. Christensen, \emph{On sets of {H}aar measure zero in abelian {P}olish
  groups}, Israel J. Math. \textbf{13} (1972), 255--260 (1973). \MR{326293}

\bibitem{MR1271480}
R.~Dougherty and J.~Mycielski, \emph{The prevalence of permutations with
  infinite cycles}, Fund. Math. \textbf{144} (1994), no.~1, 89--94.
  \MR{1271480}

\bibitem{MR2135846}
A.~Faisant, G.~Grekos, and V.~Toma, \emph{On the statistical variation of
  sequences}, J. Math. Anal. Appl. \textbf{306} (2005), no.~2, 432--439.
  \MR{2135846}

\bibitem{MR1367134}
C.~Laflamme, \emph{Filter games and combinatorial properties of strategies},
  Set theory ({B}oise, {ID}, 1992--1994), Contemp. Math., vol. 192, Amer. Math.
  Soc., Providence, RI, 1996, pp.~51--67. \MR{1367134}

\bibitem{MR3879311}
P.~Leonetti, \emph{Thinnable ideals and invariance of cluster points}, Rocky
  Mountain J. Math. \textbf{48} (2018), no.~6, 1951--1961. \MR{3879311}

\bibitem{Leo17b}
\bysame, \emph{Invariance of ideal limit points}, Topology Appl. \textbf{252}
  (2019), 169--177.

\bibitem{MR3968131}
\bysame, \emph{Limit points of subsequences}, Topology Appl. \textbf{263}
  (2019), 221--229. \MR{3968131}

\bibitem{MR3950736}
P.~Leonetti, H.~I. Miller, and L.~Miller-Van~Wieren, \emph{Duality between
  measure and category of almost all subsequences of a given sequence}, Period.
  Math. Hungar. \textbf{78} (2019), no.~2, 152--156. \MR{3950736}

\bibitem{MR1260176}
H.~I. Miller, \emph{A measure theoretical subsequence characterization of
  statistical convergence}, Trans. Amer. Math. Soc. \textbf{347} (1995), no.~5,
  1811--1819. \MR{1260176}

\bibitem{MR4089884}
H.~I. Miller and L.~Miller-Van~Wieren, \emph{Statistical cluster point and
  statistical limit point sets of subsequences of a given sequence}, Hacet. J.
  Math. Stat. \textbf{49} (2020), no.~2, 494--497. \MR{4089884}

\bibitem{MR1924673}
H.~I. Miller and C.~Orhan, \emph{On almost convergent and statistically
  convergent subsequences}, Acta Math. Hungar. \textbf{93} (2001), no.~1-2,
  135--151. \MR{1924673}

\bibitem{MR4183385}
L.~Miller-Van~Wieren, E.~Ta\c{s}, and T.~Yurdakadim, \emph{Category theoretical
  view of {$I$}-cluster and {$I$}-limit points of subsequences}, Acta Comment.
  Univ. Tartu. Math. \textbf{24} (2020), no.~1, 103--108. \MR{4183385}

\bibitem{MR579439}
M.~Talagrand, \emph{Compacts de fonctions mesurables et filtres non
  mesurables}, Studia Math. \textbf{67} (1980), no.~1, 13--43. \MR{579439}

\end{thebibliography}
%
%

\end{document}